\theoremstyle{remark}{
\newtheorem{Def}{{\rm Definition}}

}
\theoremstyle{plain}
{

\newtheorem{Prop}{Proposition}
\newtheorem{Thm}{Theorem}

}
\begin{document}
\title[Generic planar graphs and Reeb graphs of real algebraic functions]{Planar graphs embedded in generic ways and realizing them as Reeb graphs of real algebraic functions }
\author{Naoki kitazawa}
\keywords{(Non-singular) real algebraic manifolds and real algebraic maps. Smooth maps. Morse(-Bott) functions. Graphs. Planar embeddings of graphs. Reeb graphs. Arrangements (of circles). \\
\indent {\it \textup{2020} Mathematics Subject Classification}: 05C05, 14P05, 14P10, 14P25, 57R45, 58C05.}

\address{Osaka Central Advanced Mathematical Institute (OCAMI) \\
3-3-138 Sugimoto, Sumiyoshi-ku Osaka 558-8585
TEL: +81-6-6605-3103
}
\email{naokikitazawa.formath@gmail.com}
\urladdr{https://naokikitazawa.github.io/NaokiKitazawa.html}
\maketitle
\begin{abstract}
This paper is concerned with long-time interest of us, especially, the author, in realizing graphs as Reeb graphs of real algebraic functions of certain nice classes.

The {\it Reeb graph} of a differentiable function is the set consisting of all components of preimages of all single points and endowed with the quotient topology canonically. In tame cases, such objects are graphs. The Reeb graph of the natural height of the unit sphere of dimension at least $2$ is a graph with exactly one edge and homeomorphic to a closed interval. These graphs have been fundamental and strong tools in geometry since theory of Morse functions has been established in the former half of the last century. 

We present a new answer to the problem, saying that generically embedded planar graphs are homeomorphic to the Reeb graphs of real algebraic functions obtained by elementary polynomials and elementary procedures. 

\end{abstract}
\section{Introduction.}
\label{sec:1}
The paper \cite{kitazawa2} is a pioneering study motivating us, including the author himself, to study whether given graphs are Reeb graphs of real algebraic functions of certain nice classes. For the introduction or the present section, we respect the preprint of the author.
 
The {\it Reeb graph} of a differentiable function is the set of all connected components of the preimages of single points and endowed with the natural quotient topology of the manifold. The birth of theory of so-called {\it Morse} functions motivated us to introduce Reeb graphs. They are graphs in situations being not wild. They have been fundamental and strong tools in geometry, having some fundamental or important information of the manifolds. See \cite{reeb} and for a recent study, see \cite{saeki} for example.

Our problems on topological or combinatorial properties of Reeb graphs have been studied for differentiable functions since \cite{sharko}, followed first by \cite{masumotosaeki} and \cite{michalak}. The author has also contributed to this study: the author has respected not only graphs but also prescribed preimages of points (\cite{kitazawa1}). Excuse me we omit other related studies. 
 
Our study is motivated by this story and \cite{kitazawa2} is regarded as the first problem and its answer in the real algebraic situation and presented by the author himself.

Our study is also motivated by real algebraic geometry, especially, explicit construction of real functions.

Existence (and approximation) theory on real algebraic manifolds and maps has been established and has developed by Nash and Tognoli \cite{nash, tognoli}, mainly. \cite{kollar} surveys this. Construction and geography of such real algebraic objects is another problem, having been fundamental, natural and difficult.

We introduce elementary notions, and notation on manifolds and maps between manifolds.

\subsection{Manifolds and maps in differentiable (smooth) situations.}
We use ${\mathbb{R}}^k$ for the $k$-dimensional Euclidean space, being also the $k$-dimensional real vector space and the $k$-dimensional {\it real affine space}. It is also a Riemannian manifold with the so-called {\it standard Euclidean metric}, where for a point $x \in {\mathbb{R}}^k$, $x_j$ is the $j$-th component ($1 \leq j \leq k$) and for two points $x_1,x_2 \in {\mathbb{R}}^k$, $||x_1-x_2||:=\sqrt{{\Sigma}_{j=1}^k {({x_1}_j-{x_2}_j)}^2}$ denotes the distance of the two points under this metric. Let $||x||:=||x-0||$ with $0 \in {\mathbb{R}}^k$ being the origin. Let $D^k:=\{x \in {\mathbb{R}}^k \mid ||x|| \leq 1\}$, the $k$-dimensional unit disk, and $S^{k-1}:=\{x \in {\mathbb{R}}^k \mid ||x||=1\}$, the ($k-1$)-dimensional unit sphere.
The map ${\pi}_{m,n}:{\mathbb{R}}^m \rightarrow {\mathbb{R}}^n$ is the canonical projection, which maps $x=(x_1,x_2) \in {\mathbb{R}}^n \times {\mathbb{R}}^{m-n}={\mathbb{R}}^m$ to $x_1$ under the relation $m>n \geq 1$. A real polynomial map $c:{\mathbb{R}}^m \rightarrow {\mathbb{R}}^n$ is a map such that the $j$-th component $c_j:{\mathbb{R}}^m \rightarrow \mathbb{R}$ is defined by a real polynomial for arbitrary positive integers $m$ and $n$ and each integer $1 \leq j \leq n$. The canonical projection ${\pi}_{m,n}$ ($m \geq n$) is of simplest real polynomial maps. 

We use $T_x X$ for the tangent vector space of a differentiable manifold $X$ at a point $x \in X$. For a differentiable map $c:X \rightarrow Y$ between two differentiable manifolds $X$ and $Y$, let ${dc}_{x}:T_xX \rightarrow T_{c(x)} Y$ denote the differential at $x$. This is also a linear map. A {\it singular} point $x \in X$ of $c$ is a point such that the rank of the differential at $x$ ${dc}_x$ drops. Our differentiable maps here are smooth or of the class $C^{\infty}$), unless otherwise stated. A {\it diffeomorphism} means a homeomorphism which is smooth and has no singular point. Two smooth manifolds are {\it diffeomorphic} if a diffeomorphism between them exists.

A {\it Morse} function on a smooth manifold $X$ to a $1$-dimensional smooth manifold $C$ or a smooth curve is a smooth function $c:X \rightarrow C$ with no singular point on its boundary and with each singular point $p$ being always of the form $c(x_1,\cdots x_m)={\Sigma}_{j=1}^{m-i(p)+1}{x_j}^2-{\Sigma}_{j=1}^{i(p)} {x_{m-i(p)+j}}^2$ for suitable local coordinates and a suitable integer $0 \leq i(p) \leq m$.  See \cite{milnor} for Morse functions and see \cite{golubitskyguillemin} for related singularity theory of differentiable maps. A {\it Morse-Bott} function to $C$ is a smooth function of a certain generalized class of the class of Morse functions and defined as a function at each singular point which is represented as the composition of a smooth function with no singular point with a Morse function. For this, consult \cite{bott} for example. 
\subsection{Real algebraic objects.}
We introduce real algebraic objects. For this, check \cite{bochnakcosteroy, kollar} for example. We also respect our papers and preprints such as \cite{kitazawa2, kitazawa3, kitazawa4, kitazawa5, kitazawa9, kitazawa10}, where we do not assume the understanding.

A {\it non-singular} set (of ${\mathbb{R}}^m$) means a union of connected components of the zero set of a real polynomial map $c:{\mathbb{R}}^m \rightarrow {\mathbb{R}}^n$ is {\it non-singular} if the rank of the differential ${dc}_x$ does not drop at any point $x \in c^{-1}(0) \subset {\mathbb{R}}^m$. This comes from the implicit function theorem.

We call a non-empty non-singular set a {\it real algebraic} manifold. The real affine space and the unit sphere are of of such sets. Our {\it real algebraic} map is a map represented as the composition of the canonically defined embedding of a real algebraic manifold into the (underlying) real affine space with either the identity map or the canonical projection to another lower dimensional real affine space.

The real affine space, the unit sphere, the canonical embeddings of them to higher dimensional real affine spaces and their compositions with the canonical projections to lower dimensional real affine spaces give simplest examples of these cases.
\subsection{Our main result.}
Before presenting our main result, Theorem \ref{thm:1}, we explain graphs, their embeddings, and vertices of Reeb graphs, shortly. This causes no mathematical problems in the present paper. We explain these notions in the next section again.
A graph is a $1$-dimensional compact and connected CW complex the closure of each $1$-cell of which is homeomorphic to $D^1$. This is also finite. 

We can guess the definition of a {\it piecewise smooth} map easily, where we introduce related notions, more rigorously, later. We use the notion of piecewise smooth embedding here and this is no problem. An {\it embedding} of a graph is a {\it piecewise smooth} embedding into another smooth manifold. We also consider a so-called {\it generic} case with respect to the canonical projection ${\pi}_{2,1}$. An {\it M-generic} case is a kind of specific cases for the generic case.

The {\it Reeb graph} of a smooth function is explained shortly in the beginning. Reeb graphs are graphs in considerable situations. A point of a graph is a vertex of the graph if it corresponds to a component of the preimage of a single point containing some singular points of the function.


\begin{Thm}
\label{thm:1}
Let $G$ be a graph admitting an embedding into ${\mathbb{R}}^2$ which is  M-generic  with respect to ${\pi}_{2,1}$. 
We have a real algebraic function on some compact and connected real algebraic manifold which is Morse-Bott and represented as the composition of some real algebraic map as in Theorem \ref{thm:2}, presented later, with the canonical projection ${\pi}_{2,1}$, and whose Reeb graph is homeomorphic to $G$.
\end{Thm}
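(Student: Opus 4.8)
The plan is to reduce Theorem \ref{thm:1} to Theorem \ref{thm:2} by composing the real algebraic map produced there with the projection $\pi_{2,1}$ and then identifying the resulting Reeb graph combinatorially with $G$. First I would fix an M-generic embedding $e \colon G \hookrightarrow {\mathbb{R}}^2$ with respect to $\pi_{2,1}$ and record the data it determines: the height function $h := \pi_{2,1} \circ e \colon G \to \mathbb{R}$, its finitely many critical points — which by M-genericity are located at prescribed vertices of $G$ and take pairwise distinct critical values — together with the local shape of $e(G)$ at each such vertex (a local extremum of $h$ at the degree-one vertices, and branching of ``saddle'' type at the vertices of higher degree). This metric-combinatorial package is exactly the input consumed by Theorem \ref{thm:2}.

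Next I would invoke Theorem \ref{thm:2} to obtain a compact and connected real algebraic manifold $M$ together with a real algebraic map $F \colon M \to {\mathbb{R}}^2$ whose singular (fold/branch) structure is modelled on $e(G)$: over each point $q \in e(G)$ the relevant fibre of $F$ is connected, degenerating to a single point exactly over the vertices at which $h$ attains a local extremum, and no fibre sits over points outside a controlled neighbourhood of $e(G)$. I then set $f := \pi_{2,1} \circ F$. Since $\pi_{2,1}$ is a polynomial map and $F$ is real algebraic, $f$ is a real algebraic function of precisely the form demanded in the statement.

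The two substantive verifications are that $f$ is Morse-Bott and that its Reeb graph is homeomorphic to $G$. For the Morse-Bott property I would argue locally: at any point where $F$ is a submersion the image of $dF$ is all of $T{\mathbb{R}}^2$, so $f$ is a submersion there; hence the singular points of $f$ lie in the singular locus of $F$, and among these they are exactly the points over which the image tangent of the singular locus is vertical. M-genericity guarantees that such vertical tangencies occur only over the critical points of $h$ and are non-degenerate, so inserting the standard local normal form of $F$ at each such point and composing with $\pi_{2,1}$ exhibits $f$ as the composition of a submersion with a non-degenerate quadratic, i.e.\ Morse-Bott, the index being dictated by whether the underlying vertex of $G$ is a minimum, a maximum, or a saddle. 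For the Reeb graph I would define $\Phi \colon G \to \mathrm{Reeb}(f)$ by sending $q \in e(G)$ to the class of the connected fibre component of $f$ over $q$. Using connectedness of fibres from Theorem \ref{thm:2}, for each regular value $t$ the components of $f^{-1}(t)$ are in bijection with the finite set $h^{-1}(t) = e(G) \cap \{x_1 = t\}$, and the merging and splitting of these components as $t$ crosses a critical value reproduce exactly the incidences at the corresponding vertex of $G$. Thus $\Phi$ is a continuous bijection of compact Hausdorff spaces, hence a homeomorphism.

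The hard part will be the local analysis at the vertices of $G$ of degree different from two, where several sheets of the singular locus of $F$ come together and where one must simultaneously certify the Morse-Bott normal form of $f$ and the exact local model of the Reeb quotient. This is precisely where M-genericity, as opposed to mere genericity, is indispensable: it forbids vertical tangencies in the interiors of the edges and degenerate coincidences of critical values, and so it prevents the appearance of spurious degree-one or degree-three vertices in $\mathrm{Reeb}(f)$ that would otherwise break the homeomorphism with $G$.
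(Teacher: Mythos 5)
The central gap is that you invoke Theorem \ref{thm:2} as if its input were the embedded graph $e(G)$ together with its height function, producing a map $F$ whose singular structure is ``modelled on $e(G)$'' with fibres ``degenerating to a single point'' over extremal vertices. Theorem \ref{thm:2} does not accept a graph as input: its input is a non-empty open region $D\subset{\mathbb{R}}^2$ whose closure is bounded by finitely many circles meeting transversally, and its output is a real algebraic manifold $M$ with a map onto $\overline{D}$ whose fibres over interior points are products of spheres and over boundary points acquire disk factors (they are never single points). Consequently the Reeb graph of $f=\pi_{2,1}\circ F$ is governed entirely by the shape of $\overline{D}$ relative to $\pi_{2,1}$ (essentially a Poincar\'e--Reeb graph of the region), and the entire substance of the proof is the planar Euclidean construction of such a region $D$ from $G$: one first straightens the M-generic embedding by a piecewise smooth isotopy preserving genericity and the heights of vertices (Proposition \ref{prop:1}), takes a regular neighborhood $N(E_G(G\times\{1\}))$ whose Poincar\'e--Reeb graph collapses to $G$ (Propositions \ref{prop:2} and \ref{prop:3}), and then arranges sufficiently small circles $S_j$ inside that neighborhood --- with prescribed tangencies to the vertical lines through the vertices of degree at least $3$, with no singular points of $\pi_{2,1}$ on the other bounding arcs, and with the disks they bound covering the boundary of the neighborhood --- so that the resulting region satisfies the hypotheses of Theorem \ref{thm:2} and has quotient homeomorphic to $G$. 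None of this appears in your proposal; without it there is nothing to feed into Theorem \ref{thm:2}, and no reason why the components of $f^{-1}(t)$ should biject with $h^{-1}(t)$ as you assert.

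A secondary inaccuracy: M-genericity, as defined in this paper, says only that at every vertex of degree at least $2$ the value of $\pi_{2,1}$ lies in the interior of the image of a small neighborhood of that vertex; it does not give pairwise distinct critical values, and its role is to ensure that only degree-one vertices behave as local extrema of the height, which is what permits the choice of circles with the stated tangency (degree at least $3$) or no-tangency (degree $1$) behaviour. Likewise the Morse--Bott property of $f$ does not come from a normal form of $F$ along $e(G)$ but from the local models in the proof of Theorem \ref{thm:2} (Cases 1--3), where natural height functions of disks appear in the fibre directions over the bounding circles and compose with $\pi_{2,1}$ to give the Morse--Bott singularities.
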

For realizing graphs as the Reeb graphs of real algebraic functions of certain classes, see \cite{kitazawa2} and see also our preprints  \cite{kitazawa3, kitazawa5, kitazawa6, kitazawa9, kitazawa10}. Especially, our new result explicitly respects \cite{kitazawa10} mainly. 
\subsection{The organization of our paper.}
In the next section, we explain {\it piecewise smooth} maps, {\it graphs}, {\it embeddings} of graphs, and {\it Reeb graphs} of smooth functions, again, and more precisely. We also review a kind of fundamental reconstruction of real algebraic maps onto regions in ${\mathbb{R}}^2$ surrounded by circles, first presented by the author (\cite{kitazawa3}), as Theorem \ref{thm:2}. After that, we prove our main result, or Theorem \ref{thm:1}. The third section is devoted to our additional remark.

\section{On our main result.} 
Hereafter, a {\it straight line} means a real algebraic manifold in ${\mathbb{R}}^2$ which is also the zero set of a real polynomial of degree $1$. It is diffeomorphic to $\mathbb{R}$. A {\it straight segment} means a subspace of a straight line which is diffeomorphic to $D^1$.
An {\it ellipse} {\it centered at} $p$ means the boundary of the set of the form $\{(x_1,x_2) \mid a_1{(x_1-p_1)}^2+a_2{(x_2-p_2)}^2=r\} \subset {\mathbb{R}}^2$ for some $a_1,a_2,r>0$ and some ordered pair $p=(p_1,p_2)$ of real numbers.
A {\it circle} centered at $p$ and of radius $\sqrt{r}>0$ is a specific case of ellipses defined by the additional condition $a_1=a_2=1$. The point $p$ is the {\it center} of the circle (ellipse). 

They are real algebraic manifolds and diffeomorphic to $S^1$.

We assume some fundamental knowledge and arguments on the zero sets of real polynomials of degree $2$ in ${\mathbb{R}}^2$ or real plane curves of degree $2$ and quadratic real plane curves.

We also assume elementary knowledge on Euclidean plane geometry. Especially, elementary arguments on intersections of shapes such as straight lines, segments, and circles, in the Euclidean plane ${\mathbb{R}}^2$, are implicitly applied in several scenes. As another notion, we need to understand elementary arguments on the "similarity" of shapes.
\subsection{Graphs, embeddings of graphs, and differentiable functions and their Reeb graphs, again.}
We review CW complexes and PL topology, shortly.

We omit precise exposition on fundamental terminologies and propositions on CW complexes. 

A topological manifold has the structure of a CW complex. A smooth manifold has the structure of a CW complex canonically, which is a so-called {\it PL manifold}. Hereafter, unless otherwise stated, a smooth manifold is seen as a PL manifold when it is seen as a CW complex.

A {\it piecewise smooth} map $c:X \rightarrow Y$ on a CW complex $X$ into another CW complex $Y$ means a continuous map which is, for each cell $e_X$ of the CW complex and some subset $F_{e_X} \subset e_X$ whose Lebesgue measure can be calculated as $0$ in $e_X$, at each point $p_{e_X} \in e_X-F_{e_X}$, seen as a smooth map into some cell $e_{p,Y}$. A {\it piecewise smooth} embedding means a piecewise smooth map being also a topological embedding. A piecewise smooth homeomorphism is defined as a piecewise smooth map being also a homeomorphism and a {\it piecewise smooth} isotopy means an isotopy considered in the topology category which is also a piecewise smooth map.

An {\it isomorphism} between CW complexes means a piecewise smooth homeomorphism mapping each cell of a CW complex onto some cell in another CW complex and two CW complexes are {\it isomorphic} if an isomorphism exists between them. 

The definitions of piecewise smooth maps of the presented classes do not depend on the structures of the CW complexes.
   
For a union of cells of a CW complex, a {\it regular neighborhood} of it is defined. We do not explain its rigorous definition and in short, it is a neighborhood which is a closed set in the CW complex and collapses to the union.

Here, our {\it graph} is a $1$-dimensional connected, compact, and finite CW complex and an {\it edge} (a {\it vertex}) of it is its $1$-cell (resp. $0$-cell). 
We do not admit our graphs having edges whose closures are homeomorphic to $S^1$: the closures of edges of our graphs are always homeomorphic to $D^1$. We admit a {\it multigraph}: a multigraph may have a pair of distinct vertices connected by at least two distinct (closures of) edges.
The {\it edge} ({\it vertex}) {\it set} of the graph is the set of all edges (resp. vertices) of it.
The {\it degree} of a vertex $v$ of the graph $G$ is the number of all edges of it containing $v$.

As a specific case of CW complexes, an isomorphism between two graphs is a piecewise smooth homeomorphism mapping the vertex set of a graph onto that of the other and two graphs are {\it isomorphic} if there exists an isomorphism between them.

An {\it embedding} of a graph into a smooth manifold means a piecewise smooth embedding of the graph.

An embedding $e_G:G \rightarrow {\mathbb{R}}^2$ of a graph is {\it generic with respect to} ${\mathbb{R}}^2$ if the composition of this with the projection ${\pi}_{2,1}$ is a piecewise smooth embedding on each edge of the graph. The embedding is {\it M-generic with respect to} the projection if for every vertex $v$ of degree at least $2$ the image of some small open neighborhood $N(v)$ of $v$ by the composition of the embedding $e_G$ with the projection contains the value at $v$ in the interior of the image considered in $\mathbb{R}$.

The following is a kind of exercises on so-called PL topology and usage of approximation. 

\begin{Prop}
	\label{prop:1}
For an embedding $e_G:G \rightarrow {\mathbb{R}}^2$ of a graph $G$ being generic with respect to the projection ${\pi}_{2,1}$, there exists piecewise smooth isotopy
$E_{G}:G \times \{t \mid 0 \leq t \leq 1\} \rightarrow  {\mathbb{R}}^2$ satisfying the following.
\begin{enumerate}
\item The restriction of $E_{G}$ to $G \times \{0\}$ is $e_G$ where $G$ and $G \times \{0\}$ are identified by the map mapping $p \in G$ to $(p,0) \in G \times \{0\}$.
\item The restrictions of $E_{G}$ to the sets $e \times \{t\}$  {\rm (}$0 \leq t \leq 1${\rm )} are always generic with respect to the projection.
\item At each vertex $v$ of $G$, we have the values ${\pi}_{2,1} \circ E_{G}(v,t)$ {\rm (}$0 \leq t \leq 1${\rm )} as a constant value.
\item On each edge of $e$ of $G$, the restrictions of ${\pi}_{2,1} \circ E_{G}$ to the sets $e \times \{t\}$  {\rm (}$0 \leq t \leq 1${\rm )} are always piecewise smooth embeddings.
\item The image $E_{G}(G \times \{1\})$ is a union of straight segments in ${\mathbb{R}}^2$ where for any distinct pair of these straight segments, they have exactly one point in common or do not intersect.
\end{enumerate} 
As a specific case, in the case where an embedding $e_G$ is M-generic with respect to the projection, the restrictions of $E_{G}$ to the sets $e \times \{t\}$  {\rm (}$0 \leq t \leq 1${\rm )} are always M-generic with respect to the projection ${\pi}_{2,1}$.
\end{Prop}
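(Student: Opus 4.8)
The plan is to deform $e_G$ by moving every point only in the vertical direction, keeping its first coordinate $\pi_{2,1}\circ E_G(p,t)=\pi_{2,1}\circ e_G(p)$ constant in $t$. This single decision makes conditions (2), (3), (4) and the M-generic assertion essentially automatic, since each of them concerns only the first coordinate, which never changes; the whole burden then falls on keeping each time slice injective and on arranging the final image to be piecewise linear.

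First I would record the graphical normal form coming from genericity. Because $\pi_{2,1}\circ e_G$ is a piecewise smooth embedding on each closed edge $\overline{e}\cong D^1$, it is a continuous injection of a compact interval into $\mathbb{R}$, hence a strictly monotone homeomorphism onto a compact interval $[a_e,b_e]$. In particular the two endpoints of $e$ receive distinct first coordinates, and reparametrizing by the first coordinate presents $e_G(\overline{e})$ as the graph $\{(x,f_e(x))\mid a_e\le x\le b_e\}$ of a piecewise smooth function $f_e$. Injectivity of $e_G$ then says precisely that for distinct edges $e,e'$ one has $f_e(x)\ne f_{e'}(x)$ for every $x$ in the common range that is not the first coordinate of a shared endpoint; equivalently, over every vertical line the finitely many points of $e_G(G)$ carry a strict vertical order, and this order is locally constant in $x$ away from the finitely many first coordinates of the vertices. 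Next I would straighten the fibre functions by interpolation: fix a finite partition $P\subset\mathbb{R}$ containing the first coordinates of all vertices (so every $a_e,b_e\in P$), let $h_e$ be the continuous function that is affine on each cell of $P$ and agrees with $f_e$ at the points of $P\cap[a_e,b_e]$, and set
\[
E_G(p,t)=\bigl(x,\,(1-t)f_e(x)+t\,h_e(x)\bigr),\qquad x=\pi_{2,1}\circ e_G(p),\ p\in e .
\]
At $t=0$ this is $e_G$, giving (1); at each vertex every incident edge yields the common value $h_e=f_e$ there, so vertices stay fixed (whence their first coordinates are constant, giving (3)) and $E_G$ is well defined and continuous across vertices; and at $t=1$ the image is the union of the polygonal graphs of the $h_e$, i.e.\ a union of straight segments. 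The map is piecewise smooth because each $f_e$ is piecewise smooth, each $h_e$ is piecewise linear, and the interpolation is affine in $t$.

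The hard part will be to check that every slice $E_G(\cdot,t)$ is an embedding, i.e.\ that no two edges collide during the deformation, and here the choice of $P$ does the work. On a cell of $P$ no vertex lies in the interior, so the set of edges present over that cell is fixed, and since $e_G$ has no crossings their $f$-values are strictly ordered at both endpoints of the cell in the same way; the $h_e$ are affine on the cell and agree with $f_e$ at its endpoints, and since a linear function that has a fixed sign at both endpoints of an interval has that sign throughout, the $h_e$ carry exactly the same vertical order as the $f_e$ over the whole cell. Consequently, whenever $f_e(x)<f_{e'}(x)$ one also has $h_e(x)<h_{e'}(x)$, so $(1-t)f_e(x)+t\,h_e(x)<(1-t)f_{e'}(x)+t\,h_{e'}(x)$ for all $t\in[0,1]$ because a convex combination of two positive numbers is positive; the only coincidences that survive are the genuine vertex incidences at first coordinates in $P$. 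Thus the vertical order, and with it injectivity, is preserved for all $t$, so $E_G$ is a piecewise smooth isotopy.

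Finally I would harvest the remaining conditions. Since each time slice is a graph over the first coordinate, $\pi_{2,1}\circ E_G$ restricted to $e\times\{t\}$ is the monotone homeomorphism onto $[a_e,b_e]$, a piecewise smooth embedding, giving (2) and (4). Because the image at $t=1$ is an injective piecewise linear set, any two of its segments meet in at most one point—two or more common points would force a segment overlap and hence non-injectivity—so (5) holds (refining $P$ removes accidental collinear coincidences if one wishes). And because $\pi_{2,1}\circ E_G(\cdot,t)$ has the same image on every neighborhood $N(v)$ for all $t$, the M-generic property, namely that the value at $v$ lies in the interior of that image, passes unchanged from $e_G$ to every slice, which is the asserted specialization. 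I expect no deep obstacle beyond the injectivity verification; the one genuinely load-bearing idea is to fix the first coordinate of every point and to partition through the vertices, which reduces the non-crossing condition to the cellwise-linear order argument above.
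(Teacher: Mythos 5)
Your overall strategy---move every point only vertically, so that the first coordinate $\pi_{2,1}\circ E_G(p,t)=\pi_{2,1}\circ e_G(p)$ is frozen and conditions (2), (3), (4) and the M-generic clause become automatic, then interpolate linearly from each fibre function $f_e$ to its piecewise-linear interpolant $h_e$ over a partition $P$ through the vertex abscissae---is sound, and since the paper offers no proof of this proposition (it is declared an exercise in PL topology and approximation), there is nothing to compare it against route-wise. However, your key order-preservation step has a genuine gap. You assert that on each cell of $P$ the values $f_e,f_{e'}$ of the edges present are ``strictly ordered at both endpoints of the cell in the same way.'' This is false precisely at a cell endpoint which is the abscissa of a vertex shared by $e$ and $e'$: there $f_e=f_{e'}$. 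For edges sharing only one endpoint of the cell the conclusion survives (the affine difference $h_e-h_{e'}$ vanishes at the shared endpoint and takes the sign of $f_e-f_{e'}$ at the other, which by the no-crossing property is its constant sign on the open cell), but your stated reasoning does not cover this case. Worse, the paper explicitly admits multigraphs, and for a pair of parallel edges $e,e'$ joining vertices over $x=a$ and $x=b$ with $P\cap[a,b]=\{a,b\}$ one gets $h_e\equiv h_{e'}$ on $[a,b]$: the two edges are collapsed onto the same straight segment at $t=1$, so $E_G(\cdot,1)$ is not injective and you have no isotopy.

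The repair is cheap but must be made: refine $P$ so that every edge's interval $(a_e,b_e)$ contains at least one partition point that is not the abscissa of any vertex. Then every cell has at least one endpoint at which any two distinct edges present over it take distinct values (by injectivity of $e_G$), and the sign argument for $h_e-h_{e'}$ should be run from that endpoint rather than from ``both endpoints.'' With this refinement the convex-combination argument for $(1-t)(f_e-f_{e'})+t(h_e-h_{e'})$ goes through on each open cell, the fibres over partition points are fixed throughout the isotopy, and your verification of (1)--(5) and of the preservation of M-genericity is correct as written.
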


In short, a piecewise smooth isotopy of Proposition \ref{prop:1} preserves the values at vertices of the graphs of the functions represented as the compositions of the embeddings of the graphs with the projection ${\pi}_{2,1}$ and the (M-)genericity of the embeddings.
\begin{Def}
The {\it Reeb graph} of a smooth function $c:X \rightarrow C$ on a smooth manifold $X$ with no boundary to a $1$-dimensional smooth manifold $C$ with no boundary is defined in the following way.
Two points $x_1, x_2 \in X$ can be defined to be equivalent if and only if they are in a same connected component of a same preimage $c^{-1}(y)$. Let ${\sim}_c$ denote the equivalence relation on $X$. The quotient space $W_c:=X/{{\sim}_c}$ is defined as the {\it Reeb graph} of $c$ by the following rule under the conditions that $X$ is a closed manifold and that the image of the set of all singular points of $c$ is a finite set. 
Hereafter, $q_c:X \rightarrow W_c$ denotes the quotient map and the continuous function $\bar{c}:W_c \rightarrow C$ can be defined uniquely by the relation $c=\bar{c} \circ q_c$. The quotient space $W_c$ is a graph by defining its vertex $v$ as a point whose preimage ${q_c}^{-1}(v)$ contains some singular points of $c$. 
In this situation, $\bar{c}$ is a smooth function with no singular point on each edge of $W_c$.
\end{Def}
A main ingredient of the argument for the Reeb graph comes from a kind of classical results for functions of explicit nice classes such as the class of Morse(-Bott) functions. See \cite{izar} and see also \cite{martinezalfaromezasarmientooliveira1, martinezalfaromezasarmientooliveira2}. \cite{saeki} presents some related general result.  

Note also that the quotient spaces here are defined for (continuous) maps between general topological spaces and that we can enjoy similar arguments around fundamental definitions.
%


\subsection{A fundamental theorem on reconstructing real algebraic maps onto regions surrounded by real algebraic hypersurfaces.}

The following is a kind of fundamental theorems in reconstructing real algebraic maps onto the closure of a given non-empty open set in a real affine space, first presented in \cite{kitazawa3}. This also extends an essential part of \cite{kitazawa2}, in which the case of manifolds $S_j$ being mutually disjoint has been investigated. For, see also a related preprint \cite{kitazawa4}, remarking \cite{kitazawa2}.
\begin{Thm}[\cite{kitazawa3}]
\label{thm:2}
Let $D \subset {\mathbb{R}}^2$ be an open set which is non-empty and the following are satisfied.
\begin{enumerate}
\item Let $\overline{D}$ denote the closure of $D$ considered in ${\mathbb{R}}^2$. The set $\overline{D}-D$ is a subset in the union ${\bigcup}_{j=1}^{l} S_j$ of $l>0$ circles $S_j$ {\rm (}$1 \leq j \leq l${\rm )} and $S_j \bigcap \overline{D}$ is non-empty for each $1 \leq j \leq l$.   
\item {\rm (}So-called transversality of the intersections of the circles{\rm )} If two distinct circles $S_{j_1} \subset {\mathbb{R}}^2$ and $S_{j_2} \subset  {\mathbb{R}}^2$ intersect in the subset $\overline{D} \subset {\mathbb{R}}^2$ and we choose a tangent vector which is not the zero vector of each tangent vector space there, then these two vectors are mutually independent. The intersection of distinct three circles $S_{j_1}, S_{j_2}, S_{j_3} \subset  {\mathbb{R}}^2$ and $\overline{D}$ is always empty.
\item We can choose a positive integer $l^{\prime}$ with the following properties.
\begin{enumerate}
\item An integer $m_{l}(j)$ satisfying the relation $1 \leq m_{l}(j)=j^{\prime} \leq  l^{\prime}$ can be defined according to the rule that if $S_{j_1} \bigcap S_{j_2}$ and $\overline{D}$ intersect in a non-empty set, then $m_{l}(j_1)$ and $m_{l}(j_2)$ are not equal.
\item For each integer ${j_0}^{\prime}$ with $1 \leq {j_0}^{\prime} \leq l^{\prime}$, there exists at least one integer $j_0$ satisfying $m_{l_1}(j_0)={j_0}^{\prime}$. A positive integer $I_{j^{\prime}}$ is corresponded to each integer $1 \leq j^{\prime} \leq l^{\prime}$. 
\end{enumerate}
\end{enumerate}
Here, we have a real algebraic manifold $M:=\{(x,\{y_{I,j^{\prime}}\}_{j^{\prime}=1}^{l^{\prime}}) \in {\mathbb{R}}^2 \times {\prod}_{j^{\prime}=1}^{l^{\prime}} {\mathbb{R}}^{I_{j^{\prime}}+1}={\mathbb{R}}^{{\Sigma}_{j^{\prime}=1}^{l^{\prime}} (I_{j^{\prime}}+1)+2} \mid {\prod}_{j_I \in \{j \mid m_{l_1}(j)=j^{\prime \prime}\}} (f_{j_I}(x))-||y_{I,j^{\prime \prime}}||^2=0, 1 \leq j^{\prime \prime} \leq l^{\prime}\}$ under the following conditions.
\begin{enumerate}
	\setcounter{enumi}{3}
	\item A polynomial $f_j$ is of degree $2$ and $S_j:=\{x \in {\mathbb{R}}^2 \mid f_j(x)=0\}$. 
	\item The relation $\overline{D}=\{x \mid f_j(x) \geq 0\}$ holds.
	\end{enumerate}
	 Furthermore, the set $M$ is the zero set of the {\rm (}canonically defined{\rm )} real polynomial map and of dimension ${\Sigma}_{j^{\prime}=1}^{l^{\prime}} (I_{j^{\prime}}+1)+2$. In addition, the restriction of ${\pi}_{{\Sigma}_{j^{\prime}=1}^{l^{\prime}} (I_{j^{\prime}}+1)+2,2}$ there is a real algebraic map with the image being $\overline{D}$.
\end{Thm}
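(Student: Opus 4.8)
The plan is to exhibit $M$ as the transverse zero set of the explicit polynomial map $F=(F_1,\dots,F_{l'})\colon {\mathbb R}^{N}\to {\mathbb R}^{l'}$, where $N:={\Sigma}_{j'=1}^{l'}(I_{j'}+1)+2$ is the ambient dimension and the $j''$-th component is $F_{j''}(x,y):={\prod}_{j_I:\,m_l(j_I)=j''}f_{j_I}(x)-||y_{I,j''}||^2$, and then to read off its geometry fibrewise over $x\in{\mathbb R}^2$. Writing $P_{j''}(x):={\prod}_{m_l(j_I)=j''}f_{j_I}(x)$, the $j''$-th equation constrains only the block $y_{I,j''}\in{\mathbb R}^{I_{j''}+1}$ and cuts out a round sphere of radius $\sqrt{P_{j''}(x)}$ when $P_{j''}(x)>0$, a single point when $P_{j''}(x)=0$, and the empty set when $P_{j''}(x)<0$. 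Thus the fibre over an interior point of $\overline D$ is a product of spheres ${\prod}_{j''}S^{I_{j''}}$, and the image of the projection $\pi_{N,2}$ restricted to the zero set is $\{x\mid P_{j''}(x)\ge 0 \text{ for all } j''\}$.

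First I would settle non-singularity, which is the crux. In the block form of the Jacobian of $F$ dictated by the $y$-variables, the $j''$-th row carries $-2y_{I,j''}$ in the $y_{I,j''}$-columns (and $0$ in every other $y$-column) together with $\nabla_x P_{j''}(x)$ in the two $x$-columns. Wherever every block $y_{I,j''}$ is nonzero the rows are independent for free, since each is supported on its own $y$-block, so $\mathrm{rank}\,dF=l'$ there automatically. The delicate points are those where some $y_{I,j''}=0$, equivalently $P_{j''}(x)=0$, i.e. $x$ lies on some circle $S_k$ with $m_l(k)=j''$. There I would expand $\nabla_x P_{j''}$ by the product rule: every summand acquires the factor $f_k(x)=0$ except the one differentiating $f_k$, leaving $\big({\prod}_{j_I\ne k,\,m_l(j_I)=j''}f_{j_I}(x)\big)\nabla f_k(x)$.

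The key step is to show this surviving term never vanishes, and this is exactly where conditions (2) and (3) are used. The gradient $\nabla f_k(x)$ is nonzero because $S_k$ is a smooth degree-two circle, while the scalar ${\prod}_{j_I\ne k}f_{j_I}(x)$ is nonzero precisely when $x$ lies on no other circle of the same colour $j''$. The colouring $m_l$ is built in condition (3) so that equally coloured circles never meet inside $\overline D$, and the transversality of condition (2) rules out triple points; together these force $x$ to lie on at most one circle of each colour over the relevant region, so the coefficient does not vanish and the $j''$-th row stays nonzero and independent of the others. This gives $\mathrm{rank}\,dF=l'$ at every point of $M$, so $M$ is a non-singular real algebraic manifold of codimension $l'$, that is of dimension $N-l'={\Sigma}_{j'=1}^{l'}I_{j'}+2$; being a union of components of the zero set of a polynomial map, its projection to ${\mathbb R}^2$ is a real algebraic map by the very definition of such maps.

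It then remains to identify the image with $\overline D$. By condition (5) we have $\overline D={\bigcap}_j\{f_j\ge 0\}$, so on $\overline D$ each $f_j\ge 0$ forces each $P_{j''}\ge 0$ and hence $\overline D\subseteq \pi_{N,2}(M)$; taking $M$ to be the union of connected components lying over $\overline D$—as the definition of a non-singular set as a union of components permits—and using the fibrewise collapse picture above, the image is exactly $\overline D$, the boundary circles appearing as the loci where one sphere factor degenerates to a point. I expect the main obstacle throughout to be the behaviour over the vertices of the arrangement, namely the transverse double points of $\overline D-D$, where two sphere factors degenerate simultaneously; the entire purpose of the colouring is to place the two colliding circles in distinct colour-blocks, so that the two degeneracies sit in independent rows of the Jacobian and create no singular point of $M$.
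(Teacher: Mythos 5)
Your core computation is, in substance, the same argument the paper gives: the paper's proof is organized as three local models over $\overline{D}$ (an interior point; a point on exactly one circle; a transverse double point of two differently coloured circles), and these local models --- a submersion with sphere-product fibres, one natural-height Morse factor, two natural-height Morse factors --- are precisely what your Jacobian/product-rule analysis encodes. Two remarks on that part. First, the paper's normal-form packaging is not cosmetic: the explicit local models are what make the composed function Morse--Bott, which is what Theorem \ref{thm:1} actually consumes; a pure rank computation gives smoothness of $M$ but not the normal forms. Second, at a double point both of the two ``degenerate'' rows have vanishing $y$-part, so their independence is exactly linear independence of $\nabla f_{k_1}$ and $\nabla f_{k_2}$, i.e.\ the transversality clause of condition (2); and the no-triple-point clause is what caps the number of degenerate rows at two (three gradients in ${\mathbb{R}}^2$ could never be independent). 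You cite condition (2) only for excluding triple points, which misattributes where transversality enters. Your dimension count $N-l'={\Sigma}_{j'=1}^{l'}I_{j'}+2$ is the correct one for the manifold (the figure printed in the statement is evidently the ambient dimension).

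The genuine gap is the image step, and it is exactly the point where you were right to be nervous. The projection of the full zero set is $\{x \mid P_{j''}(x)\geq 0 \ \forall j''\}$, and under the hypotheses as stated this can strictly contain $\overline{D}$: condition (3) only forbids equally coloured circles from meeting \emph{inside} $\overline{D}$, so they may cross outside it. Concretely, let $S_1,S_2$ be two unit circles crossing at $p,q$ with $\overline{D}$ on their outside, and block each of $p,q$ from $\overline{D}$ by a small circle of a second colour centred at it; every hypothesis (1)--(5) holds with $S_1,S_2$ sharing a colour, yet the centre of the lens between $S_1$ and $S_2$ has $f_1f_2>0$ and lies in the image while lying outside $\overline{D}$. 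So for $M$ as literally defined (the whole zero set) the image claim is not provable, and your repair --- replacing $M$ by the union of connected components of the zero set lying over $\overline{D}$ --- is unavoidable; but you assert it rather than prove it. What is needed is a separation lemma: no connected component of the zero set lies over both $\overline{D}$ and the ``phantom'' part of the image, equivalently the phantom part is closed and disjoint from $\overline{D}$. This does follow from a local sign analysis at points of $\overline{D}-D$ using conditions (2) and (3) --- near a point of $\overline{D}$ on one circle $S_k$, or on two transverse differently coloured circles, any nearby point with some $f_j<0$ forces the corresponding product to be negative, so phantom points cannot accumulate on $\overline{D}$ --- but without it, ``the components lying over $\overline{D}$'' might not even project onto all of $\overline{D}$, and your non-singularity argument ``over the relevant region'' says nothing about possible singular points of the zero set over the phantom region. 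For what it is worth, the paper's own review has the same hole (it analyses only points of $\overline{D}$ and never mentions the complement), so your proposal is the more careful of the two; it simply stops one lemma short of closing the issue it correctly identified.
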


Note that originally, Theorem \ref{thm:2} is shown in a certain generalized situation. For example, in \cite{kitazawa10}, regions surrounded by such circles and straight lines appear explicitly.

We use $\mathbb{N} \subset \mathbb{R}$ for the set of all positive integers and ${\mathbb{N}}_{t} \subset \mathbb{N}$ for that of all positive integers smaller than the real number $t$.

What follows here, is from a kind of exercises on Morse functions and fundamental singularity theory. The function ${\pi}_{m+1,1} {\mid}_{S^m}$ is a Morse function with exactly two singular points and a Morse function for Reeb's (celebrated) theorem (\cite{reeb}). The restriction of this function to an $m$-dimensional (sub)manifold $X \subset S^m$ whose singular points are always those of the original function and in the interior of $X$ is considered here. Such a function is referred to as the {\it function of the natural height} of $X$.  

\begin{proof}[Reviewing our proofs of Theorem \ref{thm:2} in a self-contained way]
We review our original proof of \cite{kitazawa3}, respecting \cite{kitazawa9, kitazawa10}. \cite{kitazawa9, kitazawa10} review the original proof in a self-contained way. 

We need fundamental arguments in real algebraic situations, discussed in \cite{kollar}, and differential topological singularity theory, explained in \cite{golubitskyguillemin}, for example.
A main ingredient is to investigate local preimages of a point $p \in \overline{D}$ for the resulting map on $M$ onto $\overline{D}$. We see that $M$ is locally a regular smooth submanifold of the underlying Euclidean space ${\mathbb{R}}^{{\Sigma}_{j^{\prime}=1}^{l^{\prime}} (I_{j^{\prime}}+1)+2}$ and that $M$ is also non-singular. \\
\ \\
Case 1.\ The case $p$ is in the open set $D$. \\
The preimage of a sufficiently small open neighborhood diffeomorphic to $D^2$ is regarded as a copy of $D^2 \times {\prod}_{j^{\prime} \in {\mathbb{N}}_{l^{\prime}}} S^{j^{\prime}}$. The map gives a smooth map with no singular point onto the $2$-dimensional disk, locally, for suitable local coordinates. Furthermore, the preimages of each point for the map are diffeomorphic to ${\prod}_{j^{\prime} \in {\mathbb{N}}_{l^{\prime}}} S^{j^{\prime}}$. \\
\ \\
Case 2.\ The case $p$ is in $\overline{D}-D$ and contained in exactly one circle $S_j$ from $\{S_j\}_{j=1}^l$. \\
The preimage of a sufficiently small open neighborhood of $p$ diffeomorphic to $D^2$ is regarded as a copy of $D^1 \times D^{I_{j^{\prime}}+1} \times {\prod}_{j^{\prime \prime} \in {\mathbb{N}}_{l^{\prime}}-\{j^{\prime}\}} S^{I_{j^{\prime \prime}}}$. This produces the product map of a Morse function which is seen as the natural height of the disk and the identity map on the product of a copy of $D^1$ and finitely many copies of the unit spheres, locally, for suitable local coordinates. \\
\ \\
Case 3.\ The case $p$ is in $\overline{D}-D$ and contained in exactly two distinct circles $S_{j_1}$ and $S_{j_2}$ from $\{S_j\}_{j=1}^l$. \\
The preimage of a sufficiently small open neighborhood of $p$ diffeomorphic to $D^2$ is regarded as a copy of $D^{I_{{j_1}^{\prime}}+1} \times D^{I_{{j_2}^{\prime}}+1} \times {\prod}_{j^{\prime \prime} \in {\mathbb{N}}_{l^{\prime}}-\{{j_1}^{\prime},{j_2}^{\prime}\}} S^{I_{j^{\prime \prime}}}$. This produces the product map of two Morse functions each of which is seen as the natural heights of the disk and the identity map on the product of finitely many copies of the unit spheres, locally, for suitable local coordinates. \\

From the definition, $M$ is a real algebraic manifold and the zero set of a naturally defined real polynomial map. This completes the proof.
\end{proof}
\subsection{Proving Theorem \ref{thm:1}.}
We present two consecutive important propositions. We omit a precise presentation on the proof. For the proof, the method in the paper \cite{bodinpopescupampusorea} is also respected. \cite{bodinpopescupampusorea} obtains a region in ${\mathbb{R}}^2$ from a given graph in ${\mathbb{R}}^2$ surrounded by mutually disjoint $1$-dimensional real algebraic manifolds and canonically collapsing to a given graph.
\begin{Prop}
\label{prop:2}
In Proposition \ref{prop:1}, we have a small regular neighborhood $N(E_G(G \times \{1\})) \subset {\mathbb{R}}^2$ of the graph $E_G(G \times \{1\})$ where we discuss in the PL category. We have this in such a way that its boundary is a union of straight segments and homeomorphic to a disjoint union of copies of the circle $S^1$. We can also choose this in such a way that the restriction of ${\pi}_{2,1}$ to each connected component of the boundary of $N(E_G(G \times \{1\}))$ is generic with respect to ${\pi}_{2,1}$, where the circle is seen as a graph, canonically. This is not M-generic with respect to the projection. 
\end{Prop}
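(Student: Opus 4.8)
The plan is to build $N(E_G(G \times \{1\}))$ as a thin PL regular neighborhood of the polygonal graph $K:=E_G(G \times \{1\})$ and to read off the two projection properties directly from the shape of its boundary. By Proposition \ref{prop:1}(5) the set $K$ is a union of straight segments meeting so that any two of them share exactly one point or none; and since by Proposition \ref{prop:1}(4) the restrictions of ${\pi}_{2,1}$ to the edges are piecewise smooth embeddings, none of these segments is vertical (a vertical segment would collapse under ${\pi}_{2,1}$). First I would fix a triangulation of a compact neighborhood of $K$ in ${\mathbb{R}}^2$ in which $K$ is a subcomplex, chosen generically so that no $1$-simplex of the triangulation is vertical. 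This is possible because the segments of $K$ are already non-vertical, and a small perturbation of the finitely many added vertices removes the vertical direction from the finitely many added edges while keeping $K$ fixed.

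Next I would take $N:=N(K)$ to be a second derived (simplicial) regular neighborhood of $K$ in this triangulation, shrunk to be as thin as desired. Standard PL regular neighborhood theory gives that $N$ is a closed neighborhood collapsing to $K$ and that its frontier $\partial N$ is a closed PL $1$-manifold, hence homeomorphic to a disjoint union of copies of $S^1$; moreover $\partial N$ is a union of $1$-simplices of the subdivided triangulation, so it consists of straight segments, and by the choice above none of them is vertical. The transversality inherited through Proposition \ref{prop:1} (any two segments of $K$ meet in at most one point, and no three meet) guarantees that a sufficiently thin $N$ is a genuine regular neighborhood with embedded manifold boundary even at the crossing points of $K$.

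The two projection statements then follow formally. Give each boundary circle its canonical graph structure, with vertices the corners of the polygon and edges the straight segments between consecutive corners. On each such edge the map ${\pi}_{2,1}$ is the restriction of a linear map whose first-coordinate derivative is nonzero (the edge being non-vertical), hence a piecewise smooth embedding; therefore the restriction of ${\pi}_{2,1}$ to each component of $\partial N$ is generic with respect to ${\pi}_{2,1}$. For the failure of M-genericity, observe that each boundary circle has a corner of maximal first coordinate, and this corner is isolated because no boundary segment is vertical. It is a vertex of degree $2$ at which both incident (non-vertical) edges move strictly to the left, so the value of ${\pi}_{2,1}$ there is an endpoint, not an interior point, of the projected image of any small neighborhood of it. Thus the M-generic condition is violated at this vertex and $\partial N$ is not M-generic.

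The main obstacle I expect is the control of $\partial N$ near the vertices of $K$: I must arrange simultaneously that the boundary be polygonal, that it carry no vertical segment, and that it still be an embedded $1$-manifold collapsing correctly at the points where several segments of $K$ cross. This is precisely where the genericity of the triangulation and the transversality of $K$ are combined, and where the ``usage of approximation'' enters, namely to tilt any accidental vertical boundary segment off the vertical direction without destroying the regular-neighborhood structure.
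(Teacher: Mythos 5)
Your proposal is correct as a proof of Proposition \ref{prop:2} as stated; note that the paper itself gives no proof here (it explicitly omits one, deferring to the method of \cite{bodinpopescupampusorea}), so yours is necessarily a different and more self-contained route. Taking a second derived simplicial neighborhood of the polygonal graph $K=E_G(G\times\{1\})$ inside a triangulation perturbed so that no $1$-simplex is vertical is a clean way to get a polygonal manifold boundary on which ${\pi}_{2,1}$ embeds every edge, and your observation that any generically embedded polygonal circle must fail M-genericity at its rightmost corner is exactly right and is the intended content of the last sentence of the proposition. Two caveats. First, your parenthetical ``any two segments of $K$ meet in at most one point, and no three meet'' overstates Proposition \ref{prop:1}(5): at a vertex of $G$ of degree $d\geq 3$, $d$ segments do meet at one point; this does not harm your argument, since regular neighborhood theory gives an embedded $1$-manifold frontier for any compact subcomplex of ${\mathbb{R}}^2$, but the justification should lean on that rather than on a false ``no triple point'' claim. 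Second, be aware that the neighborhood the paper actually intends to use downstream (see Proposition \ref{prop:3}(1) and Figures \ref{fig:1}--\ref{fig:2}) is a ``staircase'' polygon in which, at each corner, exactly one of the two incident boundary segments is horizontal; a generically perturbed triangulation produces no horizontal segments at all, so your $N(K)$ satisfies Proposition \ref{prop:2} but would have to be reshaped (not merely subdivided) before Proposition \ref{prop:3} applies. Since the statement you were asked to prove does not require that extra structure, this is a compatibility remark rather than a gap.
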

\begin{Prop}
\label{prop:3}
In Proposition \ref{prop:2}, we can also have the objects with the following properties by adding vertices to the original graph $G$ suitably.
\begin{enumerate}
\item For each connected component of the boundary of $N(E_G(G \times \{1\}))$, seen as a graph, two adjacent edges $e_{v,1}$ and $e_{v,2}$ of it having exactly one vertex $v$ of it in common, the closure of exactly one of these two edges of it is a straight segment in the straight line represented by the form $\{(t,y_v) \mid t \in \mathbb{R}\}$ for a suitable number $y_v$ and that of the other is not.  
\item We have the following quotient space with the structure of a graph as we have defined the Reeb graph of a smooth function.
\begin{enumerate}
\item The quotient space $Q_{{\pi}_{2,1}}(N(E_G(G \times \{1\})))$of $N(E_G(G \times \{1\}))$ is defined in the same way as that in defining the Reeb graph of a smooth function on a manifold with no boundary by using he restriction of ${\pi}_{2,1}$. Let $q_{{\pi}_{2,1},N(E_G(G \times \{1\}))}$ denote the quotient map.
\item The quotient space has the structure of a graph by defining its vertices $v$ as points whose preimages ${q_{{\pi}_{2,1},N(E_G(G \times \{1\}))}}^{-1}(v)$ contain some vertices of the boundary of $N(E_G(G \times \{1\}))$. 
\end{enumerate}
This graph is called a {\rm Poincar\'e-Reeb graph} of the regular neighborhood $N(E_G(G \times \{1\}))$, respecting \cite{bodinpopescupampusorea}. We can define similar objects and do similar discussions for similar situations and suitably generalized situations.
\item The Poincar\'e-Reeb graph of $N(E_G(G \times \{1\}))$ collapses to a graph homeomorphic to $E_G(G \times \{1\})$. More precisely, for each vertex of graph $G$ of degree greater than $3$, exactly one edge of the graph $Q_{{\pi}_{2,1}}(N(E_G(G \times \{1\})))$ is corresponded to, this yields a one-to-one correspondence and by collapsing these edges of the graph, we have a graph homeomorphic to $G$ and $E_G(G \times \{1\})$.
\end{enumerate}
\end{Prop}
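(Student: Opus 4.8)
The plan is to treat the three assertions in order: (1) as a normalization of the polygonal boundary furnished by Proposition~\ref{prop:2}, (2) as a transcription of the Reeb-graph construction to the two-dimensional region $N(E_G(G\times\{1\}))$, and (3) as the genuine combinatorial content. For (1) I would start from the boundary $\partial N(E_G(G\times\{1\}))$, a disjoint union of polygonal circles on which $\pi_{2,1}$ is generic. The shape of the thin neighborhood near each vertex of $G$ and near each local $x$-extremum of an edge is still free; I would use this freedom, together with the insertion of new degree-two vertices into $G$ (which merely subdivides edges and hence changes neither the homeomorphism type of $G$ nor the conclusions of Propositions~\ref{prop:1} and~\ref{prop:2}), to truncate every outer corner of the boundary by a short horizontal segment lying in a line $\{(t,y_v)\mid t\in\mathbb{R}\}$. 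Arranging the truncations so that no two share an endpoint yields a boundary whose consecutive edges alternate between horizontal caps and non-horizontal sides; verifying that this can always be achieved compatibly with the genericity recorded in Proposition~\ref{prop:2} is an elementary exercise in planar PL geometry, and it gives (1).

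For (2) I would define the equivalence relation on $N(E_G(G\times\{1\}))$ exactly as for the Reeb graph, identifying two points when they lie in one connected component of one fiber of the restriction of $\pi_{2,1}$. Each fiber is the intersection of a vertical line with the region, hence a finite union of closed vertical intervals together with finitely many isolated points, so it has finitely many components. By the genericity of $\pi_{2,1}$ on the boundary there are only finitely many values of $t$ at which the number or the incidence of these components can change, namely the local $x$-extrema of the boundary and the $x$-coordinates of the boundary vertices; between consecutive such values the combinatorial type of the fiber is constant. Hence the quotient is a finite one-dimensional CW complex, and declaring its vertices to be the classes meeting the boundary vertices makes it a graph, the Poincar\'e-Reeb graph of the statement. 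This is the routine adaptation of \cite{bodinpopescupampusorea}.

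For (3), the decisive assertion, I would use that $N(E_G(G\times\{1\}))$ is a regular neighborhood, so it collapses onto $E_G(G\times\{1\})\cong G$, and that the quotient map has contractible interval fibers, so it is a homotopy equivalence preserving the first Betti number. The substance is local. After (1) each edge of $G$ is an $x$-monotone straight segment and contributes a single edge to the quotient, while each vertex $v$ contributes the image of its thickened star. For $\deg(v)\leq 3$ this image is a single point of the expected local type (an endpoint, a regular point, or a trivalent split or merge), so the quotient already agrees with $G$ there. For $\deg(v)>3$ the generic projection resolves the thickened star into a short trivalent subtree; I would identify the distinguished collapsible edge arising from this resolution, aim to show that the assignment $v\mapsto$(this edge) is a bijection onto the edges to be collapsed as claimed, and check that collapsing exactly these edges restores a single vertex of degree $\deg(v)$, yielding a graph isomorphic to $G$ and to $E_G(G\times\{1\})$.

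The main obstacle will be the bookkeeping at the high-degree vertices in (3): from the horizontal normal form of (1) and the M-genericity transported by Proposition~\ref{prop:1} one must pin down the exact local model of the thickened star under $\pi_{2,1}$ and confirm that it contributes precisely the claimed distinguished edge, so that the correspondence is genuinely one-to-one. For $\deg(v)=4$ this edge is visibly the segment joining the single merge vertex to the single split vertex, and the passage to larger degree is exactly where the enumeration of resolved edges must be controlled carefully against the statement. One must also ensure that the truncations of (1) create no spurious $x$-extrema that would subdivide an edge of the quotient unexpectedly, and that all critical values can be made distinct; both can be secured by a final small perturbation within the isotopy class supplied by Proposition~\ref{prop:1}.
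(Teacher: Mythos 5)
The paper itself gives no proof of Proposition \ref{prop:3}: it states explicitly that a precise presentation is omitted and defers to the method of \cite{bodinpopescupampusorea}, so your proposal can only be compared with the construction that is implicit in Figure \ref{fig:1} and in STEP 1 of the proof of Theorem \ref{thm:1}. Your treatment of parts (1) and (2) is consistent with that construction and is essentially the routine adaptation you describe.

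The genuine gap is in part (3), exactly where you locate ``the main obstacle,'' and the resolution you gesture at points in the wrong direction. You write that at a vertex $v$ of degree $d>3$ ``the generic projection resolves the thickened star into a short trivalent subtree,'' and you propose a final small perturbation making all critical values distinct. But if $v$ has $a$ edges going left and $b$ going right ($a+b=d$, $a,b\geq 1$ by M-genericity), then putting the $x$-extrema of the $a-1$ left corner arcs and the $b-1$ right corner arcs of $\partial N(E_G(G\times\{1\}))$ in general position produces a binary merge--split tree with $a+b-2$ trivalent internal vertices, hence $d-3$ internal edges. This agrees with the claimed ``exactly one edge'' only for $d=4$; for $d\geq 5$ it contradicts the asserted one-to-one correspondence between vertices of degree greater than $3$ and collapsed edges. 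To obtain the statement as written you must choose the regular neighborhood so that, at each such $v$, all left-hand branches merge at a single point of the quotient and all right-hand branches split at a single point, these two points being joined by exactly one internal edge; this deliberately non-generic normalization is what the paper arranges afterwards (each sector of $N(v_{E_G})$ carries exactly one extremal boundary vertex, and the associated circles are tangent to the vertical line $\{({\pi}_{2,1}(v_{E_G}),t)\mid t\in\mathbb{R}\}$, so the relevant critical values are made to coincide rather than to separate). Your perturbation ``making all critical values distinct'' must therefore be replaced by its opposite at the high-degree vertices, and the edge count in (3) has to be redone for that degenerate, non-binary local model.
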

\begin{proof}[A proof of Theorem \ref{thm:1}.]
A main ingredient of our strategy is construction of a region $D$ as in Theorem \ref{thm:2}, which is surrounded by sufficiently small circles $S_j$ centered at suitable points and of fixed radii in the interior of $N(E_G(G \times \{1\})) \subset {\mathbb{R}}^2$. Moreover, we need to choose the piecewise smooth isotopy and $N(E_G(G \times \{1\})) \subset {\mathbb{R}}^2$ of Propositions \ref{prop:2} and \ref{prop:3}, suitably. \\

As presented before, we need fundamental arguments on Euclidean geometry. For example, arguments on intersections of shapes are essential, and for sufficiently small objects, we implicitly use arguments on similarity of shapes. \\
\ \\
STEP 1  Choosing a piecewise smooth isotopy $E_G$ and the set $N(E_G(G \times \{1\})) \subset {\mathbb{R}}^2$ in Propositions \ref{prop:2} and \ref{prop:3}. \\

We can choose the piecewise smooth isotopy $E_G$ and the set $N(E_G(G \times \{1\})) \subset {\mathbb{R}}^2$ in Propositions \ref{prop:2} and \ref{prop:3} in such a way that the following hold.
\begin{itemize}
	\item For each vertex $v_{E_G}$ of the graph $E_G(G \times \{1\})$ which is not of degree $2$, we have a neighborhood $N(v_{E_G}) \subset {\mathbb{R}}^2$ of $v_{E_G}$ which is diffeomorphic to $D^2$ and whose boundary is a circle centered at $v_{E_G}$ and of a fixed sufficiently small radius.
\item For each connected component $E_{v_{E_G}}$ of $N(v_{E_G})-(N(v_{E_G}) \bigcap E_G(G \times \{1\}))$, there exists exactly one vertex $v_{E_{v_{E_G}}}$ of degree $2$ of a connected component $C_{E_{v_{E_G}}}$ of the boundary of $N(E_G(G \times \{1\}))$ where the value of the restriction of ${\pi}_{2,1}$ to the connected component $C_{E_{v_{E_G}}}$ of the boundary of $N(E_G(G \times \{1\}))$ is not in the interior of the image of any small open neighborhood $v_{E_{v_{E_G}}} \in U_{E_{v_{E_G}}} \subset C_{E_{v_{E_G}}}$ of $v_{E_{v_{E_G}}}$ considered in $C_{E_{v_{E_G}}}$. 
\item In addition, every vertex $v_{E_{v_{E_G}}}$ of degree $2$ of any connected component $C_{E_{v_{E_G}}}$ of the boundary of $N(E_G(G \times \{1\}))$ as above, or a vertex of $C_{E_{v_{E_G}}}$ where the value of the restriction of ${\pi}_{2,1}$ to a connected component $C_{E_{v_{E_G}}}$ of the boundary of $N(E_G(G \times \{1\}))$ is not in the interior of the image of any small open neighborhood $v_{E_{v_{E_G}}} \in U_{E_{v_{E_G}}} \subset C_{E_{v_{E_G}}}$ of $v_{E_{v_{E_G}}}$ considered in $C_{E_{v_{E_G}}}$, appears in this way. Thus we have a natural one-to-one correspondence.
 For this, important arguments of \cite{kitazawa8} are respected, where we do not assume related knowledge. Our related arguments here are regarded as a kind of variants of the original arguments.
\end{itemize}

\ \\
STEP 2 Choosing circles $S_j$. \\
We can choose a circle $S_j:=S_{v_{E_{v_{E_G}}}}$ centered at a suitable point and of a suitable sufficiently small radius for each vertex $v_{E_G}$ of the graph $E_G(G \times \{1\})$ and each connected component $C_{E_{v_{E_G}}}$ of $N(v_{E_G})-(N(v_{E_G}) \bigcap E_G(G \times \{1\}))$ with $v_{E_{v_{E_G}}} \in C_{E_{v_{E_G}}}$ in STEP 1 and satisfying the following.
\begin{itemize}
\item The circle $S_{v_{E_{v_{E_G}}}}$ bounds the disk $D_{v_{E_{v_{E_G}}}}$ containing the vertex $v_{E_{v_{E_G}}}$ of degree $2$ of the connected component $C_{E_{v_{E_G}}}$ in its interior.
\item In the case the vertex $v_{E_G}$ of the graph $E_G(G \times \{1\})$ is of degree at least $3$, the circle $S_{v_{E_{v_{E_G}}}}$ passes through a point of the straight line of the form $\{({\pi}_{2,1}(v_{E_G}),t) \mid t \in \mathbb{R}\}$ and at the point the tangent vector spaces of $S_{v_{E_{v_{E_G}}}}$ and the straight line, which are of $1$-dimensional, agree.
\item The set $S_{v_{E_{v_{E_G}}}} \bigcap N(E_G(G \times \{1\}))$ is diffeomorphic to $D^1$.
\item The disks $D_{v_{E_{v_{E_G}}}}$ are mutually disjoint.
\item  In the case the vertex $v_{E_G}$ of the graph $E_G(G \times \{1\})$ is of degree at least $3$, the restriction of ${\pi}_{2,1}$ to the 1-dimensional manifold $S_{v_{E_{v_{E_G}}}} \bigcap N(E_G(G \times \{1\}))$ has exactly one singular point and the unique point is in its interior and of the form $({\pi}_{2,1}(v_{E_G}),y_{v_{E_{v_{E_G}}}})$. In the case the vertex $v_{E_G}$ of the graph $E_G(G \times \{1\})$ is of degree $1$, the restriction of ${\pi}_{2,1}$ to the 1-dimensional manifold $S_{v_{E_{v_{E_G}}}} \bigcap N(E_G(G \times \{1\}))$ has no singular point. 
\end{itemize} 
See FIGURE \ref{fig:1} for example.

We can choose a circle $S_j$ centered at a suitable point and of a suitable sufficiently small radius as presented in FIGURE \ref{fig:2} for each vertex $v_{N(E_G(G \times \{1\}))}$ of $2$ of each connected component $C_{N(E_G(G \times \{1\}))}$ of the boundary of $N(E_G(G \times \{1\}))$ where the value of the restriction of ${\pi}_{2,1}$ to the connected component of the boundary of $N(E_G(G \times \{1\}))$ is in the interior of the image of a small open neighborhood $v_{N(E_G(G \times \{1\}))} \in U_{v_{N(E_G(G \times \{1\}))}} \subset C_{N(E_G(G \times \{1\}))}$ considered in $C_{N(E_G(G \times \{1\}))}$. We can also choose this as a circle bounding the disk $D_j$ containing $v_{N(E_G(G \times \{1\}))}$ in the interior.  

We can also choose remaining circles $S_j$ centered at suitable points and of suitable sufficiently small radii as presented in FIGURE \ref{fig:2} to have our desired region $D$. For the last circle $S_j$, we use $D_j$ for the disk bounded by $S_j$.
We can choose all chosen circles $S_j$ here satisfying the following conditions.
\begin{itemize}
\item The set $S_j \bigcap N(E_G(G \times \{1\}))$ is diffeomorphic to $D^1$.
\item The restriction of ${\pi}_{2,1}$ to the $1$-dimensional manifold $S_{j} \bigcap N(E_G(G \times \{1\}))$ has no singular point for circles being not chosen as $S_{v_{E_{v_{E_G}}}}$ in the present step.
\item The disks $D_j$ cover the boundary of of $N(E_G(G \times \{1\}))$.
\item Our desired region $D$ for Theorem \ref{thm:2} is obtained by putting "circles $S_j$ in Theorem \ref{thm:2}" as the chosen circles $S_j$. Here, our region $D$ is also a subset of the interior of $N(E_G(G \times \{1\}))$.   
\end{itemize} 

\begin{figure}
		\includegraphics[width=70mm,height=65mm]{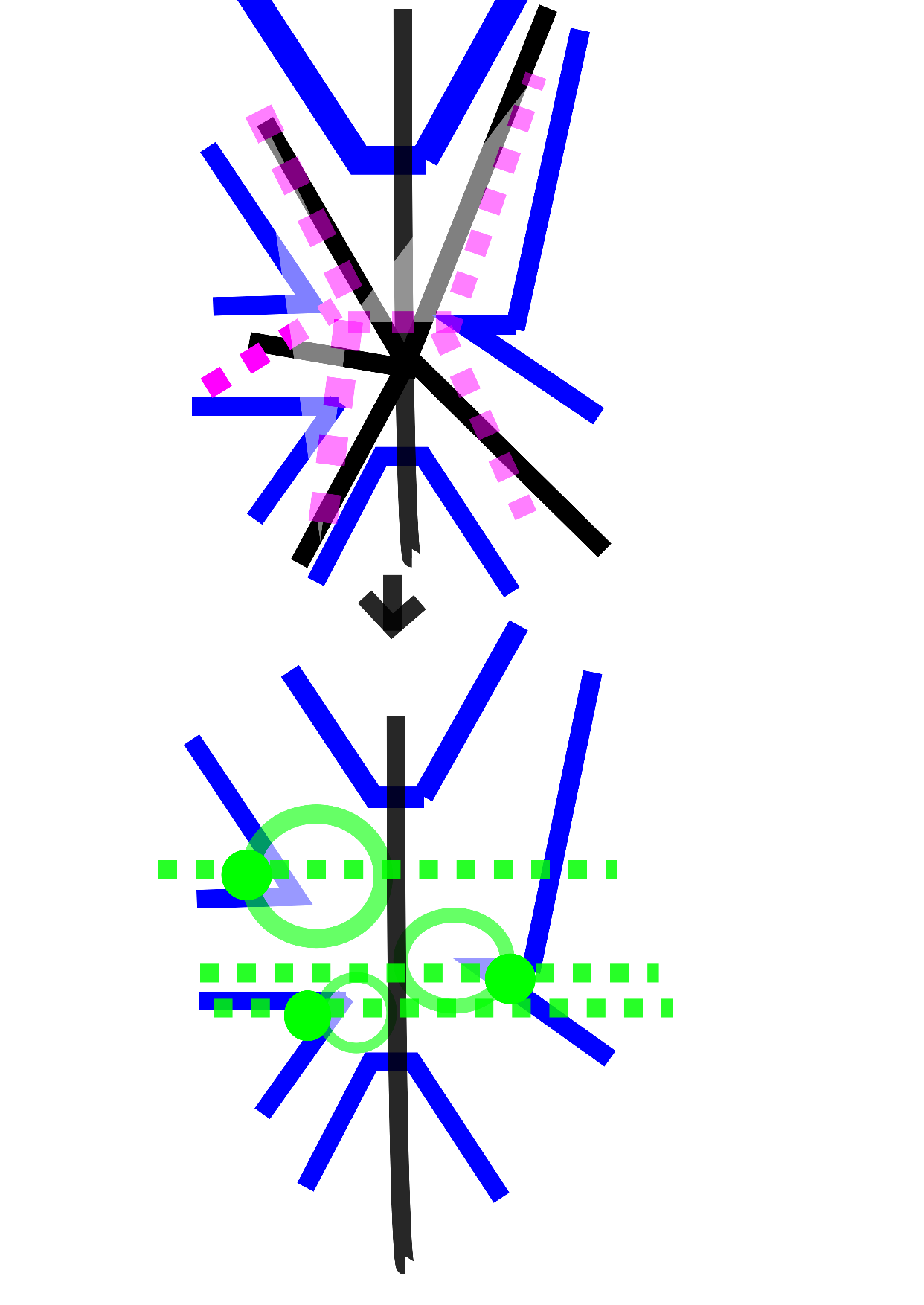}
		\caption{The black straight objects in the figure above show the graph $E_G(G \times \{1\})$ around a vertex $v_{E_G}$ of the graph $E_G(G \times \{1\})$ of degree $5$, the purple straight objects show the graph $Q_{{\pi}_{2,1}}(N(E_G(G \times \{1\})))$ locally, and the blue objects show the boundary of $N(E_G(G \times \{1\}))$, which is also a graph. In the figure below, the green circles show important circles $S_j:=S_{v_{E_{v_{E_G}}}}$ and the green dotted lines show straight lines of the form $\{(t,y_0) \mid t \in \mathbb{R}\}$ passing through the centers of the circles, each green dot shows a point in a green circle and the dotted green straight line associated to the circle and outside $N(E_G(G \times \{1\}))$. The straight lines of the form $\{({\pi}_{2,1}(v_{E_G}),t) \mid t \in \mathbb{R}\}$ are depicted by the black straight lines. Such objects also show relative locations of the shapes.}
                      \label{fig:1}

		\end{figure}

\begin{figure}
		\includegraphics[width=70mm,height=65mm]{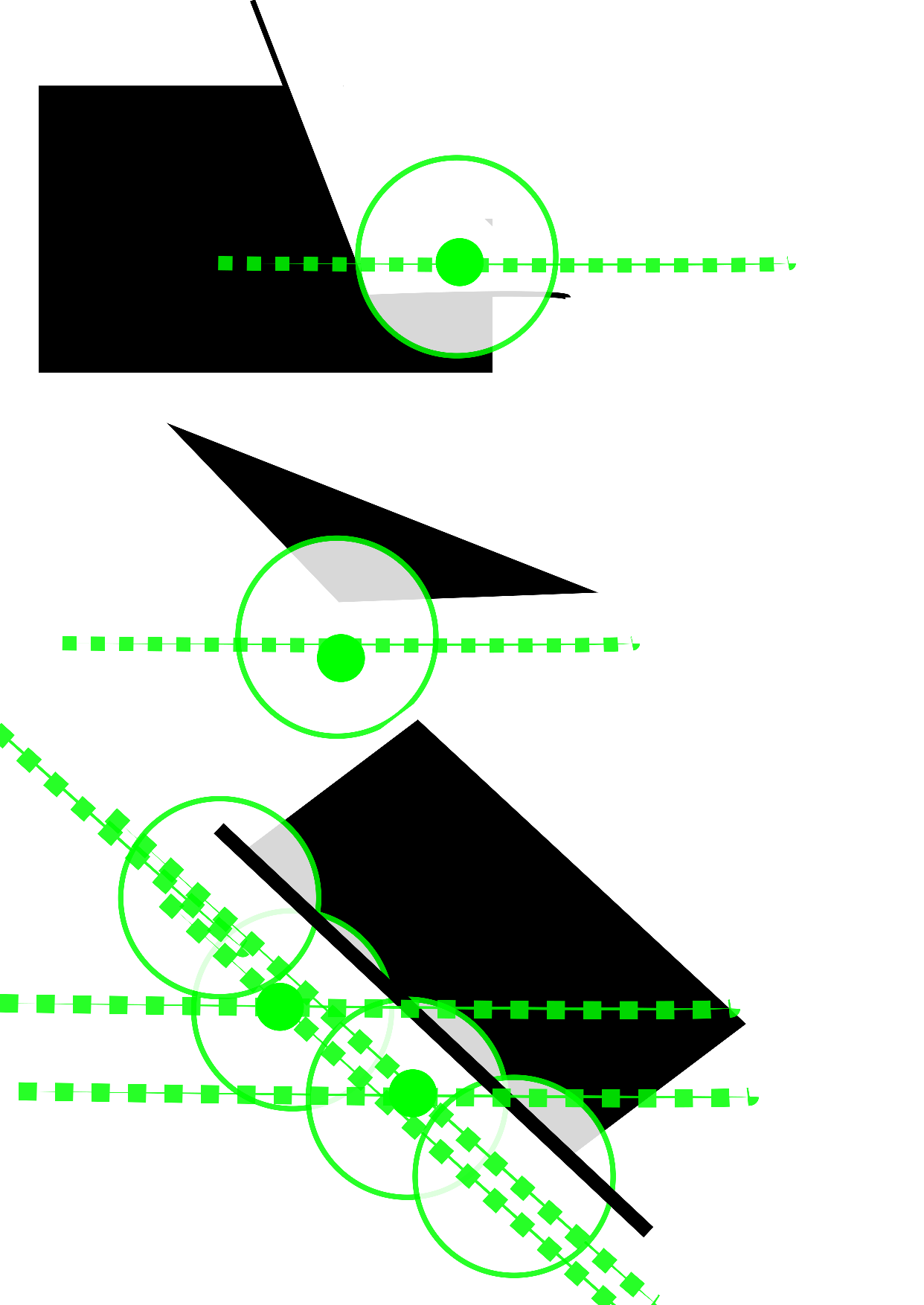}
		\caption{The former two figures here show circles $S_j$ for vertices $v_{N(E_G(G \times \{1\}))}$ of degree $2$ of connected components $C_{N(E_G(G \times \{1\}))}$ of the boundary of $N(E_G(G \times \{1\}))$ where the values of the restrictions of ${\pi}_{2,1}$ to the connected components of the boundary of $N(E_G(G \times \{1\}))$ are in the interiors of the images of some small open neighborhoods $v_{N(E_G(G \times \{1\}))} \in U_{v_{N(E_G(G \times \{1\}))}} \subset C_{N(E_G(G \times \{1\}))}$.
The last figure shows an explicit case for "the remaining circles $S_j$". In these present figures, objects which are colored in black or shaded are for $N(E_G(G \times \{1\}))$.
The green circles show important circles $S_j$ and the green dotted lines show straight lines of the form $\{(t,y_0) \mid t \in \mathbb{R}\}$ passing through the centers of the circles. Such objects show relative locations of the shapes.}
\label{fig:2}
		\end{figure}
By Theorem \ref{thm:2} with fundamental arguments on singularity theory of differentiable maps and fundamental arguments on real algebraic geometry, a desired real algebraic Morse-Bott function has been also constructed. More precise exposition is presented in the preprint \cite{kitazawa3, kitazawa9, kitazawa10}, where we do not assume the arguments there.

This completes our proof.
\end{proof}
\section{Our additional remark.}
\subsection{The Reeb graphs of functions represented as the compositions of maps as in Theorem \ref{thm:2} with ${\pi}_{2,1}$ in \cite{kitazawa3}.}
We have first investigated functions represented as the compositions of maps as in Theorem \ref{thm:2} with ${\pi}_{2,1}$ in \cite{kitazawa3}. \cite{kitazawa10} also shows this where we need a little effort and need to change regions there, surrounded by circles centered at several points and of fixed radii and straight lines, into regions as in Theorem \ref{thm:2}.
There, Reeb graphs are explicitly presented with the structures of the graphs. For example, \cite{kitazawa10} investigates cases of trees.
\subsection{Arrangements of circles in the plane ${\mathbb{R}}^2$.}
Motivated mainly by \cite{kitazawa3}, the author has started systematic studies of arrangements of circles, regions surrounded by them in the plane ${\mathbb{R}}^2$, and their shapes and combinatorics for example. For related pioneering studies, see \cite{kitazawa5, kitazawa6, kitazawa7, kitazawa8}. Our new case is not founded or studied explicitly in these studies.

 \section{Conflict of interest and data.}
The author is a researcher at Osaka Central
Advanced Mathematical Institute (OCAMI researcher). The institute is also supported by MEXT Promotion of Distinctive Joint Research Center Program JPMXP0723833165. The author is not employed there. However, the author thanks this.

Other than the present file for the present paper, no data are associated.

\end{document}